\documentclass[11pt,a4paper]{article}

\usepackage[T1]{fontenc}
\usepackage[utf8]{inputenc}
\usepackage[margin=1in]{geometry}
\usepackage{amsmath,amssymb,amsthm,mathtools}
\usepackage{enumitem}
\usepackage{microtype}
\usepackage[hidelinks]{hyperref}

\allowdisplaybreaks
\setlist[enumerate,1]{label=\textup{(\roman*)},leftmargin=2.4em}

\theoremstyle{plain}
\newtheorem{theorem}{Theorem}[section]
\newtheorem{lemma}[theorem]{Lemma}
\newtheorem{proposition}[theorem]{Proposition}
\newtheorem{corollary}[theorem]{Corollary}
\newtheorem{conjecture}[theorem]{Conjecture}

\theoremstyle{definition}
\newtheorem{definition}[theorem]{Definition}

\theoremstyle{remark}
\newtheorem{remark}[theorem]{Remark}

\newcommand{\tr}{\operatorname{tr}}
\newcommand{\homc}{\operatorname{hom}}
\newcommand{\ind}{\operatorname{ind}}
\newcommand{\Forb}{\operatorname{Forb}}
\newcommand{\Split}{\mathsf{Split}}
\newcommand{\R}{\mathbb{R}}
\newcommand{\Z}{\mathbb{Z}}
\newcommand{\one}{\mathbf{1}}

\begin{document}

\title{Generalized Spectral Closedness and the Nonexistence of Walk-Realizable Supporters for Split Graphs}
\author{Yanyang Li}

\date{}

\maketitle

\begin{abstract}
Let $\mathcal F=\{2K_2,C_4,C_5\}$, so that $\operatorname{Forb}(\mathcal F)$ is the class of split graphs. Resolving a conjecture of Wang and Tang, we prove that the class of split graphs is generalized spectrally closed and that it admits no walk-realizable $\mathcal F$-supporter of any finite order.
\end{abstract}

\noindent\textbf{Keywords:} split graph; generalized spectrum; patterned closed walk; graph homomorphism; \(2\)-SAT.

\noindent\textbf{Mathematics Subject Classification (2020):} 05C50; 05C75.

\section{Introduction}

All graphs in this paper are finite, simple, and nonempty. We write \(K_r\), \(P_r\), and \(C_r\) for the complete graph, the path, and the cycle on \(r\) vertices, respectively, and \(2K_2=K_2\sqcup K_2\). For a graph \(G\), its complement is denoted by \(\overline G\), its adjacency matrix by \(A(G)\), and the subgraph induced by \(U\subseteq V(G)\) by \(G[U]\). If \(G\) has order \(n\), then \(I_n\), \(J_n\), and \(\one_n\) denote the identity matrix, the all-ones matrix, and the all-ones column vector of the indicated size; subscripts are omitted when the size is clear.

For a family \(\mathcal F\) of graphs, let \(\Forb(\mathcal F)\) be the class of graphs containing no member of \(\mathcal F\) as an induced subgraph. A graph is \emph{split} if its vertex set is the disjoint union of a clique and an independent set. The forbidden-induced-subgraph theorem of F\"oldes and Hammer gives \(\Split=\Forb(2K_2,C_4,C_5)\); see \cite{FoldesHammer}. Throughout the paper we therefore set \(\mathcal F:=\{2K_2,C_4,C_5\}\).

Two graphs are \emph{generalized cospectral} if their adjacency matrices are cospectral and the adjacency matrices of their complements are cospectral. A graph class \(\mathcal C\) is \emph{generalized spectrally closed} if every graph generalized cospectral with a member of \(\mathcal C\) also belongs to \(\mathcal C\).

Wang and Tang \cite{WangTang} introduced patterned closed walks to study generalized spectral closedness of hereditary graph classes. For a nonempty binary word \(\beta=(\beta_1,\ldots,\beta_k)\), put \(M_1(G):=A(G)\) and \(M_0(G):=A(\overline G)\), and define
\begin{equation}\label{eq:beta-count}
 n_\beta(G):=\tr\!\left(M_{\beta_1}(G)\cdots M_{\beta_k}(G)\right).
\end{equation}
Equivalently, \(n_\beta(G)\) counts the closed vertex sequences whose \(i\)-th step is an edge of \(G\) when \(\beta_i=1\) and an edge of \(\overline G\) when \(\beta_i=0\). Wang and Tang called these \(\beta\)-closed walks and proved that every \(n_\beta\) is invariant under generalized cospectrality \cite[Lemma~2.2]{WangTang}.

For \(\ell\geq1\), let \(\mathcal U_{\leq\ell}\) be the set of isomorphism classes of nonempty graphs on at most \(\ell\) vertices. We use the normalization \(\ind(H,G):=\bigl|\{U\subseteq V(G):G[U]\cong H\}\bigr|\), so that \(\ind(H,H)=1\).

\begin{definition}[Wang--Tang]\label{def:supporter}
Let \(\ell\geq5\). An \(\mathcal F\)-\emph{supporter of order \(\ell\)} is a graph parameter of the form
\[
 \Phi(G)=\sum_{H\in\mathcal U_{\leq\ell}}d_H\ind(H,G)
\]
whose coefficients satisfy \(d_H\geq0\) for all \(H\), \(d_F>0\) for every \(F\in\mathcal F\), and \(d_H=0\) whenever \(H\in\Forb(\mathcal F)\). It is \emph{walk-realizable} if it is a real linear combination of the parameters \(n_\beta\) with \(1\leq |\beta|\leq\ell\).
\end{definition}

Wang and Tang formulate walk-realizability using one representative from each equivalence class of binary words under cyclic shifts and reversal. This is equivalent to Definition~\ref{def:supporter}, because the trace in \eqref{eq:beta-count} is unchanged by cyclic shifts and, since \(A(G)\) and \(A(\overline G)\) are symmetric, by reversal of the word.

The conditions in Definition~\ref{def:supporter} imply that \(\Phi(G)=0\) exactly when \(G\in\Forb(\mathcal F)\): every induced subgraph of an \(\mathcal F\)-free graph is \(\mathcal F\)-free, whereas a graph outside \(\Forb(\mathcal F)\) contains some \(F\in\mathcal F\) as an induced subgraph. Thus a walk-realizable supporter is a finite, nonnegative induced-subgraph certificate encoded by generalized spectral invariants. Wang and Tang found no walk-realizable \(\mathcal F\)-supporter for split graphs up to order \(7\), and found no counterexample to generalized spectral closedness in an exhaustive search over small graphs. This led them to the following conjecture \cite[Conjecture~4.8]{WangTang}.

\begin{conjecture}[Wang--Tang]\label{conj:WangTang}
Let \(\mathcal F=\{2K_2,C_4,C_5\}\). The class
\(\Forb(\mathcal F)\) of split graphs is generalized spectrally closed, but it admits no walk-realizable \(\mathcal F\)-supporter of order \(\ell\) for any integer \(\ell\geq5\).
\end{conjecture}

We resolve Conjecture~\ref{conj:WangTang} in full.

\begin{theorem}\label{thm:main}
For \(\mathcal F=\{2K_2,C_4,C_5\}\), the following statements hold.
\begin{enumerate}
 \item The class \(\Forb(\mathcal F)=\Split\) is generalized spectrally closed.
 \item For no integer \(\ell\geq5\) does there exist a walk-realizable \(\mathcal F\)-supporter of order \(\ell\).
\end{enumerate}
\end{theorem}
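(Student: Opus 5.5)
The plan treats the two parts separately. Part (i) is a rigidity statement for the whole generalized spectrum. Part (ii) is a non-rigidity statement for walk counts of bounded length. In both we use the reformulation
\[
n_\beta(G)=\homc\bigl(\vec C_\beta,\widehat G\bigr),
\]
where \(\widehat G\) is the complete graph on \(V(G)\) with edges coloured \(1\) or \(0\) according to membership in \(E(G)\), and \(\vec C_\beta\) is the closed walk whose edges are coloured by \(\beta\). By \cite[Lemma~2.2]{WangTang}, generalized cospectral graphs have equal \(n_\beta\) for every \(\beta\). Since \(A(\overline G)=J-I-A(G)\), this is equivalent to having the same spectrum and the same walk numbers \(\one^{\top}A^k\one\) for all \(k\).

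\medskip
\noindent\emph{Part (i).} The first step is to encode splitness as \(2\)-SAT. Give each vertex \(u\) a Boolean variable recording whether \(u\) lies in the clique side \(C\) or the independent side \(I\). An edge \(uv\) forbids \(u,v\in I\), which gives the implications \(u\in I\Rightarrow v\in C\). A non-edge forbids \(u,v\in C\), which gives \(u\in C\Rightarrow v\in I\). Thus the implication digraph is the \(2n\)-vertex digraph with adjacency matrix
\[
N=\begin{pmatrix}0&A(G)\\ A(\overline G)&0\end{pmatrix},
\]
where the two blocks correspond to the literals \(I\) and \(C\). By the standard \(2\)-SAT criterion, \(G\) is non-split if and only if some vertex \(u\) has a closed walk from \(u_I\) to \(u_C\) and back. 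In terms of \(G\), this means \(u\) carries both an odd closed walk of pattern \(1(01)^a\) and an odd closed walk of pattern \(0(10)^b\).

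The spectrum of \(N\) and of the related products \(A\overline A\) and \(\overline A A\) is determined by the generalized spectrum, since \(\tr N^{k}\) and the traces of mixed words are all of the form \(n_\beta\). The second step is to detect the "same vertex, both parities" condition from these traces. I would try a Perron--Frobenius argument on the odd words \(\beta=1(01)^a\) and \(0(10)^b\), looking at the growth rates and the signs of their traces as \(a,b\to\infty\).

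If that fails, I would fall back on the degree-sequence characterization of Hammer and Simeone. In that route one would have to show that the quantities \(\sum_{i\le m}d_i-m(m-1)-\sum_{i>m}d_i\) are forced by the main part of the spectrum. I expect this step, turning a pointwise, strongly-connected-component condition into a trace condition, to be the main obstacle of the whole proof.

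\medskip
\noindent\emph{Part (ii).} Fix \(\ell\) and suppose \(\Phi=\sum_{|\beta|\le\ell}c_\beta n_\beta\) is a supporter of order \(\ell\). Then \(\Phi\) vanishes on every split graph and is positive on every non-split graph. It therefore suffices to exhibit one non-split graph \(G\), together with split graphs \(S_1,\dots,S_r\) and reals \(\lambda_j\), such that
\[
n_\beta(G)=\sum_j\lambda_j\,n_\beta(S_j)\quad\text{for all }|\beta|\le\ell.
\]
Linearity then gives \(\Phi(G)=0\), a contradiction.

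To build such a relation I would pass to blow-ups. Replace each vertex of a small template by a clique or an independent set of size \(t\). Then every \(n_\beta\) of the blow-up is a polynomial in \(t\) of degree at most \(|\beta|\le\ell\). The same holds for split templates with free part-sizes \(t_1,\dots,t_s\).

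Take \(G\) to be the blow-up of \(C_4\) (or of \(2K_2\)), which is non-split. The condition becomes a finite system of polynomial-moment identities in the parameters, with only finitely many \(\beta\) up to cyclic shift and reversal. I would solve it by choosing split templates (cliques joined to independent sets, with prescribed cross-adjacencies) whose colored-cycle homomorphism densities reproduce those of \(K_{m,m}\) up to length \(\ell\), and then use Lagrange interpolation in the part sizes to extract an exact linear relation.

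Part (i) guarantees that no single split graph can match all lengths, but matching only the finitely many words of length at most \(\ell\) is a finite linear-algebra problem. The real difficulty is to show that the span of split walk-vectors is large enough. The plan is to prove this by a Vandermonde-type independence argument on the exponents.
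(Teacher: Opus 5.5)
There are genuine gaps in both parts. For (i), your implication digraph is correct. Its matrix $N$ equals the paper's $T_G$ after swapping the two literal blocks, and its characteristic polynomial is a generalized spectral invariant (conjugate by $\operatorname{diag}(Q,Q)$ with the Johnson--Newman matrix $Q$). What you lack is a spectral property of $N$ that detects satisfiability. Traces of the odd words $1(01)^a$ and $0(10)^b$ sum over all vertices. A split graph such as $P_4$ already has one vertex with a walk $u_I\to u_C$ (forced into the clique) and another with a walk $w_C\to w_I$ (forced into the independent set). Nothing in the growth rates or signs of these sums places both walk types at one vertex, and you give no argument that would. The Hammer--Simeone fallback fails as formulated. $C_6\sqcup K_1$ and the subdivided claw ($K_{1,3}$ with each edge subdivided) differ by a Godsil--McKay switching, so they are generalized cospectral. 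Yet your quantity $\sum_{i\le m}d_i-m(m-1)-\sum_{i>m}d_i$ is $-6$ for the first and $-4$ for the second, so it is not determined by the spectrum. The missing idea is the contrapositive symmetry $PNP=N^{\top}$, where $P$ is the block swap:
\begin{itemize}
\item Literal complementation maps each strongly connected component $C$ to a component $\tau(C)$ whose induced digraph is the transpose, so the two diagonal blocks have equal characteristic polynomials.
\item Unsatisfiability is equivalent to the existence of a self-dual component.
\item There is at most one self-dual component, because for $i\neq j$ the pairs $\{x_i,\neg x_i\}$ and $\{x_j,\neg x_j\}$ are joined by arcs in both directions.
\item The Perron root of that irreducible block is simple.
\end{itemize}
Factoring $\chi_N$ along a topological order of the components then shows that $\chi_N$ is a square in $\Z[z]$ exactly when $G$ is split. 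That is the invariant to compare.

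For (ii), your reduction is sound. If for each $\ell$ some non-split $G$ has $(n_\beta(G))_{|\beta|\le\ell}$ in the span of the corresponding vectors of split graphs, then no supporter exists, and by finite-dimensional duality this is implied by Theorem~\ref{thm:split-testing}. But producing that relation is the whole content, and you do not produce it. The templates ``reproducing the densities of $K_{m,m}$'' are unspecified, approximate matching of densities gives no exact identity, and the ``Vandermonde-type argument'' is never formulated. Fixing one target such as a blow-up of $C_4$ gives you no handle on spanning. The paper argues in the primal and lets the base graph vary. For an arbitrary $G$ it forms the split graph $S_{a,b}(G)$: clique blocks of size $a$, independent blocks of size $b$, and $C_i$ joined to $D_j$ iff $(I+A)_{ij}=1$. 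Then $\Phi(S_{a,b}(G))$ is a polynomial in $(a,b)$ that must vanish identically. Its top coefficients in $b$ and then in $a$ are $\tr((I+A)^{2r})$, $\one^{\top}(I+A)^{2r}\one$ and the products $\Theta_\lambda(I+A)$. The normal form expresses $\mathcal T$ through $\tr(A^r)$ and products of $\one^{\top}A^r\one$. Lov\'asz's independence of homomorphism counts (cycles versus linear forests) then kills every coefficient by descending induction. Your interpolation in the part sizes is the dual of this coefficient extraction. Two things are missing: a family indexed by an arbitrary base graph whose walk statistics surface in the leading coefficients, and an independence theorem showing that those statistics separate $\mathcal T$.
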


The two parts are proved by different structural arguments. For the first, we construct a \(2|V(G)|\times 2|V(G)|\) nonnegative matrix \(T_G\) from a canonical \(2\)-SAT formula. We show that \(G\) is split precisely when \(\chi_{T_G}\) is a square. The literal-complementation involution pairs the strongly connected components of the implication digraph. A satisfiable instance has only paired components, whereas an unsatisfiable instance has a unique self-dual component; the Perron root of that component occurs with odd multiplicity.

For the second part, let \(\mathcal T:=\operatorname{span}_{\R}\{n_\beta:\beta\text{ is a nonempty binary word}\}\), where, as usual, \(\operatorname{span}\) means finite linear span. We prove the stronger statement that \(\Phi\in\mathcal T\) and \(\Phi(S)=0\) for every split graph \(S\) together imply \(\Phi\equiv0\). A normal form reduces \(\Phi\) to closed-walk counts \(\tr(A^r)\) and total-walk counts \(\one^{\top}A^r\one\). We then evaluate the normal form on a two-parameter family of split blow-ups and eliminate its coefficients by descending in the \(a\)- and \(b\)-degrees. The final separation step uses the linear independence of graph-homomorphism counts \cite[Corollary~5.45]{LovaszBook}.

\section{A square criterion for split graphs}

Let \(G\) be a graph on vertex set \([n]:=\{1,\ldots,n\}\), write \(A=A(G)\) and \(\overline A=A(\overline G)=J_n-I_n-A\), and define
\begin{equation}\label{eq:TG}
 T_G:=\begin{pmatrix}0&\overline A\\ A&0\end{pmatrix}.
\end{equation}

\begin{theorem}\label{thm:square-criterion}
A graph \(G\) is split if and only if \(\det(zI_{2n}-T_G)\) is a square in \(\Z[z]\).
\end{theorem}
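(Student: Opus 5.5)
Since \(T_G\) is block anti-diagonal, the natural first step is to view it as the adjacency matrix of a digraph on \(2n\) vertices, namely the implication digraph of a canonical \(2\)-SAT formula. Index the first \(n\) rows and columns by the literals \(x_v\) and the last \(n\) by the literals \(\neg x_v\), for \(v\in[n]\); read \(x_v\) as ``\(v\) lies in the clique''. The formula for \(G\) consists of:
\begin{itemize}
\item the clause \(x_u\vee x_v\) for each edge \(uv\);
\item the clause \(\neg x_u\vee\neg x_v\) for each non-edge \(uv\), \(u\neq v\).
\end{itemize}
A satisfying assignment is exactly a partition of \(V(G)\) into a clique and an independent set, so the formula is satisfiable if and only if \(G\) is split.

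The implication arcs are \(\neg x_u\to x_v\) for each edge and \(x_u\to\neg x_v\) for each non-edge. The first family is encoded by the block \(A\) and the second by \(\overline A\), so with this indexing \(T_G\) is the adjacency matrix of the implication digraph \(D_G\). Let \(P\) be the permutation swapping the two blocks, which realizes the complementation involution \(\sigma\colon x_v\leftrightarrow\neg x_v\). Because \(A\) and \(\overline A\) are symmetric, \(PT_GP=T_G^{\top}\). Hence \(\sigma\) reverses arcs and maps strongly connected components (SCCs) to SCCs. The principal submatrix of \(T_G\) on \(\sigma(C)\) is, after reindexing, the transpose of the one on \(C\), so the two have the same characteristic polynomial.

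Next I would put \(T_G\) into Frobenius normal form by ordering the vertices along a topological order of the SCCs. Then \(\chi_{T_G}=\prod_C\chi_{T_G[C]}\), the product running over all SCCs \(C\). A component with \(\sigma(C)=C\) is \emph{self-dual}; it contains some pair \(x_v,\neg x_v\). The standard \(2\)-SAT theorem says the formula is satisfiable if and only if no self-dual component exists.
\begin{itemize}
\item If \(G\) is split, the SCCs split into pairs \(\{C,\sigma(C)\}\) with \(C\neq\sigma(C)\). Then \(\chi_{T_G}=\bigl(\prod_{\text{pairs}}\chi_{T_G[C]}\bigr)^2\), and each factor lies in \(\Z[z]\) because it is monic with integer coefficients. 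This proves one direction.
\end{itemize}

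For the converse, suppose \(G\) is not split. The key structural point, which I expect to be the main obstacle, is that there is exactly one self-dual component. This uses the completeness of the formula: every pair of distinct variables appears in some clause. Suppose \(C\neq D\) are both self-dual. A variable's two literals lie in a single self-dual component, so I can pick variables \(u\) with \(x_u,\neg x_u\in C\) and \(w\neq u\) with \(x_w,\neg x_w\in D\).
\begin{itemize}
\item If \(uw\) is an edge, the arcs \(\neg x_u\to x_w\) and \(\neg x_w\to x_u\) give arcs \(C\to D\) and \(D\to C\).
\item If \(uw\) is a non-edge, the arcs \(x_u\to\neg x_w\) and \(x_w\to\neg x_u\) do the same.
\end{itemize}
Either way \(C\) and \(D\) would merge into one SCC, a contradiction.

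Let \(C_0\) be the unique self-dual component. It contains \(x_v\) and \(\neg x_v\) and is strongly connected, so \(T_G[C_0]\) is irreducible and nonzero. By Perron--Frobenius its spectral radius \(\rho>0\) is a simple root of \(\chi_{T_G[C_0]}\). Every other component occurs in a pair \(\{C,\sigma(C)\}\) whose two characteristic polynomials coincide, so together the pairs contribute an even multiplicity of \(\rho\). Therefore \(\rho\) is a root of \(\chi_{T_G}\) of odd multiplicity. Hence \(\chi_{T_G}\) is not a square in \(\R[z]\), and a fortiori not in \(\Z[z]\). Finally, I would check that the standard satisfiability criterion applies to this formula, which contains no unit clauses and only clauses on distinct variables; this is routine.
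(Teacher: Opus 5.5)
Your proposal is correct and follows essentially the same route as the paper. You use the same \(2\)-SAT encoding with \(T_G\) as the implication digraph, and your identity \(PT_GP=T_G^{\top}\) is just a matrix form of the paper's contrapositive symmetry; you pair components under complementation, use the same completeness-of-clauses argument to show the self-dual component is unique, and finish with the odd multiplicity of the Perron root. The only cosmetic difference is that you show non-squareness in \(\R[z]\), which indeed implies it in \(\Z[z]\).
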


\begin{proof}
Introduce one Boolean variable \(x_i\) for each \(i\in[n]\), where \(x_i=1\) means that \(i\) is assigned to the clique part. Consider the \(2\)-CNF formula
\begin{equation}\label{eq:2sat-formula}
 \Psi_G:=
 \bigwedge_{\substack{1\leq i<j\leq n\\ ij\in E(G)}}(x_i\vee x_j)
 \ \wedge\!
 \bigwedge_{\substack{1\leq i<j\leq n\\ ij\notin E(G)}}(\neg x_i\vee\neg x_j).
\end{equation}
In \eqref{eq:2sat-formula}, the edge clauses prevent an edge from having both endpoints in the independent part, and the nonedge clauses prevent a nonedge from having both endpoints in the clique part. Hence
\begin{equation}\label{eq:split-sat}
 G\text{ is split}\quad\Longleftrightarrow\quad \Psi_G\text{ is satisfiable}.
\end{equation}

Let \(\mathcal I_G\) be the implication digraph of \(\Psi_G\), with the literals ordered as \(x_1,\ldots,x_n,\neg x_1,\ldots,\neg x_n\). We use the row-source convention for adjacency matrices. If \(ij\in E(G)\), then the clause \(x_i\vee x_j\) gives the arcs \(\neg x_i\to x_j\) and \(\neg x_j\to x_i\). If \(ij\notin E(G)\), then \(\neg x_i\vee\neg x_j\) gives \(x_i\to\neg x_j\) and \(x_j\to\neg x_i\). Therefore the adjacency matrix of \(\mathcal I_G\) is exactly \(T_G\).

Let \(\tau\) be literal complementation: \(\tau(x_i)=\neg x_i\) and \(\tau(\neg x_i)=x_i\). The implication digraph has the contrapositive symmetry
\begin{equation}\label{eq:contrapositive}
 u\to v\quad\Longleftrightarrow\quad \tau(v)\to\tau(u).
\end{equation}
By \eqref{eq:contrapositive}, if \(C\) is a strongly connected component, then \(C^*:=\tau(C)\) is also a strongly connected component. Under the bijection \(u\mapsto\tau(u)\), the digraph induced by \(C^*\) is the transpose of the digraph induced by \(C\). Thus, writing \(T_G[C]\) for the principal submatrix indexed by \(C\),
\begin{equation}\label{eq:paired-charpoly}
 \chi_{T_G[C^*]}(z)=\chi_{T_G[C]}(z).
\end{equation}
Call \(C\) \emph{self-dual} if \(C=C^*\).

The strongly connected component criterion for \(2\)-SAT says that \(\Psi_G\) is unsatisfiable precisely when some pair \(x_i,\neg x_i\) lies in one strongly connected component \cite{AspvallPlassTarjan,AspvallErratum}. Such a component is self-dual. Conversely, every self-dual component contains \(u\) and \(\tau(u)\) for each \(u\) in it, and hence contains a complementary pair. Therefore
\begin{equation}\label{eq:unsat-selfdual}
 \Psi_G\text{ is unsatisfiable}\quad\Longleftrightarrow\quad
 \mathcal I_G\text{ has a self-dual strongly connected component}.
\end{equation}

We next show that there is at most one self-dual component. Suppose that \(C\) and \(D\) are distinct self-dual components. Since a self-dual component contains a complementary pair, choose distinct indices \(i,j\) with \(x_i,\neg x_i\in C\) and \(x_j,\neg x_j\in D\). If \(ij\in E(G)\), then \(\neg x_i\to x_j\) and \(\neg x_j\to x_i\), so the condensation digraph has an arc from \(C\) to \(D\) and an arc from \(D\) to \(C\). If \(ij\notin E(G)\), the arcs \(x_i\to\neg x_j\) and \(x_j\to\neg x_i\) give the same conclusion. Both alternatives contradict the acyclicity of the condensation digraph. Hence
\begin{equation}\label{eq:unique-selfdual}
 \mathcal I_G\text{ has at most one self-dual strongly connected component}.
\end{equation}

After ordering the strongly connected components topologically, \(T_G\) is block upper triangular. Its characteristic polynomial therefore factors as
\begin{equation}\label{eq:scc-factorization}
 \chi_{T_G}(z)=\prod_C\chi_{T_G[C]}(z),
\end{equation}
where the product is over all strongly connected components.

Suppose first that \(G\) is split. By \eqref{eq:split-sat} and \eqref{eq:unsat-selfdual}, there is no self-dual component. The components form disjoint pairs \(C,C^*\), and \eqref{eq:paired-charpoly} and \eqref{eq:scc-factorization} show that every pair contributes \(\chi_{T_G[C]}(z)^2\). Thus \(\chi_{T_G}\) is a square in \(\Z[z]\).

Conversely, suppose that \(G\) is not split. Then \(\Psi_G\) is unsatisfiable, and \eqref{eq:unsat-selfdual}--\eqref{eq:unique-selfdual} give a unique self-dual component \(C_0\). All other components occur in dual pairs, so
\begin{equation}\label{eq:selfdual-factor}
 \chi_{T_G}(z)=\chi_{T_G[C_0]}(z)R(z)^2
\end{equation}
for some \(R\in\Z[z]\). The matrix \(T_G[C_0]\) is nonnegative and irreducible. It has order at least two and its strongly connected digraph contains a directed cycle, so its spectral radius \(\rho>0\). By the Perron--Frobenius theorem, \(\rho\) is an algebraically simple root of \(\chi_{T_G[C_0]}\). Its multiplicity in the right-hand side of \eqref{eq:selfdual-factor} is therefore \(1+2m\) for some \(m\geq0\), which is odd. A square polynomial has only roots of even multiplicity, so \(\chi_{T_G}\) is not a square. This proves the converse.
\end{proof}

\section{Generalized spectral closedness}

We use the following standard characterization of Johnson and Newman \cite{JohnsonNewman}. An orthogonal matrix \(Q\) is called \emph{regular} if \(Q\one=\one\), equivalently, if every row sum is one.

\begin{theorem}[Johnson--Newman]\label{thm:JN}
Let \(G\) and \(H\) be graphs on \(n\) vertices. They are generalized cospectral if and only if there is a regular orthogonal matrix \(Q\) such that
\[
 Q^{\top}A(G)Q=A(H).
\]
For such a matrix one also has \(Q^{\top}A(\overline G)Q=A(\overline H)\).
\end{theorem}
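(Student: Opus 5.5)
The plan is to derive the Johnson--Newman characterization from the generalized cospectrality condition together with the observation that \(\one\) spans a common structure for \(A\) and \(J\). For the forward direction, suppose \(A:=A(G)\) and \(B:=A(H)\) are cospectral and \(\overline A=J-I-A\), \(\overline B=J-I-B\) are cospectral. I will first show that \(\det(zI-A-tJ)=\det(zI-B-tJ)\) for all real \(t\). The matrix determinant lemma gives
\[
\det(zI-A-tJ)=\det(zI-A)\bigl(1-t\,\one^{\top}(zI-A)^{-1}\one\bigr),
\]
so this characteristic polynomial is affine in \(t\). Cospectrality of \(A,B\) fixes the value at \(t=0\). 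Cospectrality of the complements fixes the value at \(t=-1\), after the substitution \(z\mapsto -1-z\) applied to \(\overline A=-(A+I)+J\). An affine function of \(t\) is determined by two values, so the walk generating functions \(\one^{\top}(zI-A)^{-1}\one\) and \(\one^{\top}(zI-B)^{-1}\one\) coincide. Equivalently, \(\one^{\top}A^k\one=\one^{\top}B^k\one\) for all \(k\).

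Next I decompose \(\one\) along eigenspaces. Let \(\lambda_1,\dots,\lambda_s\) be the common distinct eigenvalues, with spectral projections \(P_i\) for \(A\) and \(P_i'\) for \(B\). Equality of \(\sum_i \|P_i\one\|^2/(z-\lambda_i)\) for \(A\) and for \(B\) gives \(\|P_i\one\|=\|P_i'\one\|\) for every \(i\). In each eigenspace \(E_i\) of \(A\) I choose an orthonormal basis whose first vector is \(P_i\one/\|P_i\one\|\) whenever \(P_i\one\neq0\). I do the same in the corresponding eigenspace \(E_i'\) of \(B\); the dimensions agree by cospectrality. Assembling these bases gives orthogonal matrices \(U,V\) with \(U^{\top}AU=V^{\top}BV=D\) diagonal and \(U^{\top}\one=V^{\top}\one\), where the two vectors agree coordinatewise because the norms match. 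Then \(Q:=UV^{\top}\) is orthogonal, satisfies \(Q^{\top}AQ=B\), and \(Q^{\top}\one=VU^{\top}\one=VV^{\top}\one=\one\). Transposing the last identity gives \(Q\one=\one\) as well.

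For the converse and the final sentence, suppose \(Q\) is orthogonal with \(Q\one=\one\). Then \(Q^{\top}\one=Q^{\top}Q\one=\one\), so \(Q^{\top}JQ=Q^{\top}\one\one^{\top}Q=\one\one^{\top}=J\). Hence \(Q^{\top}(J-I-A)Q=J-I-B\), which gives both the cospectrality of the complements and the stated identity \(Q^{\top}A(\overline G)Q=A(\overline H)\). The cospectrality of \(A\) and \(B\) is immediate from the similarity.

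The step I expect to be the main obstacle is the first one: extracting equality of the main-angle data \(\|P_i\one\|\) from the two cospectrality hypotheses. This is where the affine-in-\(t\) argument is essential. I would carefully check the sign bookkeeping for \(\overline A\), namely \(\det(zI-\overline A)=(-1)^n\det\bigl((-1-z)I-A+J\bigr)\). After that step, the construction of \(Q\) is routine linear algebra.
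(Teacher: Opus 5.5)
Your argument is correct. Note, though, that the paper does not prove this equivalence. It quotes it from Johnson and Newman and only justifies the final sentence, via $A(\overline G)=J-I-A(G)$ and $Q^{\top}JQ=J$; that justification is exactly your converse paragraph.

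Your forward direction therefore supplies what the paper outsources, and it follows the standard main-angle route:
\begin{itemize}
\item since $J$ has rank one, $\det(zI-A-tJ)$ is affine in $t$;
\item the values at $t=0$ and at $t=-1$ (the latter read off from the complement after $z\mapsto -1-z$) force equal walk generating functions;
\item partial fractions then give equal main angles $\|P_i\one\|$;
\item eigenbases adapted to $\one$ glue into a regular orthogonal $Q=UV^{\top}$.
\end{itemize}
Your approach buys a self-contained proof. It also makes explicit the intermediate invariants $\one^{\top}A^k\one$, which are precisely the $q_k$ in the paper's normal form (Lemma~\ref{lem:normal-form}). The paper's citation buys brevity.

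One small slip: transposing $Q^{\top}\one=\one$ yields $\one^{\top}Q=\one^{\top}$, which is a statement about column sums, not $Q\one=\one$. The correct justification is $Q\one=QQ^{\top}\one=\one$, the same trick you use in the converse paragraph.
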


The last assertion follows from \(A(\overline G)=J-I-A(G)\) and \(Q^{\top}JQ=J\). It also immediately yields the patterned-walk invariance proved by Wang and Tang \cite[Lemma~2.2]{WangTang}.

\begin{proposition}[Wang--Tang]\label{prop:beta-invariance}
If \(G\) and \(H\) are generalized cospectral, then \(n_\beta(G)=n_\beta(H)\) for every nonempty binary word \(\beta\).
\end{proposition}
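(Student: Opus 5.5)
The plan is to derive Proposition~\ref{prop:beta-invariance} directly from Theorem~\ref{thm:JN}, working entirely at the level of matrices. First, generalized cospectral graphs have the same order, since cospectral matrices have the same size. So take \(G\) and \(H\) on \(n\) vertices. Theorem~\ref{thm:JN} then supplies a regular orthogonal matrix \(Q\) with \(Q^{\top}A(G)Q=A(H)\), and also \(Q^{\top}A(\overline G)Q=A(\overline H)\). In the notation of \eqref{eq:beta-count}, this says \(Q^{\top}M_b(G)Q=M_b(H)\) for both \(b\in\{0,1\}\).

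Next, fix a nonempty word \(\beta=(\beta_1,\ldots,\beta_k)\). We expand the product
\[
M_{\beta_1}(H)\cdots M_{\beta_k}(H)=\bigl(Q^{\top}M_{\beta_1}(G)Q\bigr)\cdots\bigl(Q^{\top}M_{\beta_k}(G)Q\bigr).
\]
Because \(Q\) is orthogonal, every interior factor \(QQ^{\top}\) equals \(I_n\). The product therefore telescopes to \(Q^{\top}M_{\beta_1}(G)\cdots M_{\beta_k}(G)\,Q\).

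Finally, take traces. Using cyclicity of the trace and \(QQ^{\top}=I_n\) once more, we get
\[
n_\beta(H)=\tr\bigl(Q^{\top}M_{\beta_1}(G)\cdots M_{\beta_k}(G)\,Q\bigr)=\tr\bigl(M_{\beta_1}(G)\cdots M_{\beta_k}(G)\bigr)=n_\beta(G).
\]

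There is no genuine obstacle here. The only point needing care is the complement relation \(Q^{\top}A(\overline G)Q=A(\overline H)\). It follows from \(A(\overline G)=J-I-A(G)\) together with \(Q^{\top}IQ=I\) and \(Q^{\top}JQ=Q^{\top}\one\one^{\top}Q=\one\one^{\top}=J\). The last identity uses \(Q\one=\one\), and hence \(Q^{\top}\one=Q^{\top}Q\one=\one\). This is exactly where regularity of \(Q\) enters: without it only the words \(\beta=1^k\), i.e.\ ordinary closed walks, would be controlled.
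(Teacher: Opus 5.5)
Your proof is correct and matches the paper's argument: Theorem~\ref{thm:JN} gives a single regular orthogonal \(Q\) that conjugates both \(M_1(G)=A(G)\) and \(M_0(G)=A(\overline G)\), and inserting \(QQ^{\top}=I\) between factors together with similarity-invariance of the trace finishes the proof. Your check that \(Q^{\top}JQ=J\) via \(Q^{\top}\one=\one\) is the same observation the paper makes right after Theorem~\ref{thm:JN}.
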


\begin{proof}
By Theorem~\ref{thm:JN}, the two matrices \(M_0(G),M_1(G)\) are simultaneously conjugated to \(M_0(H),M_1(H)\) by one orthogonal matrix. Insert \(QQ^{\top}=I\) between consecutive factors in \eqref{eq:beta-count} and use invariance of trace under similarity.
\end{proof}

\begin{theorem}\label{thm:generalized-closure}
The class of split graphs is generalized spectrally closed.
\end{theorem}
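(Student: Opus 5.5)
The plan is to combine Theorem~\ref{thm:square-criterion} with Proposition~\ref{prop:beta-invariance}. We show that the characteristic polynomial of \(T_G\) depends only on generalized spectral data; the square criterion then transfers splitness from \(G\) to any generalized cospectral \(H\).

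Let \(G\) be split and \(H\) generalized cospectral with \(G\). Both have \(n\) vertices, since cospectral adjacency matrices have the same size. Because \(\chi_{T_G}\) is a monic polynomial of degree \(2n\), Newton's identities show that it is determined by the power traces \(\tr(T_G^k)\) for \(1\le k\le 2n\). So it suffices to prove \(\tr(T_G^k)=\tr(T_H^k)\) for all \(k\geq1\).

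From the block form \eqref{eq:TG}, \(T_G^k\) has zero diagonal blocks when \(k\) is odd, so \(\tr(T_G^k)=0\). For \(k=2m\) we have
\[
 T_G^{2m}=\begin{pmatrix}(\overline A A)^m&0\\0&(A\overline A)^m\end{pmatrix},
\]
hence \(\tr(T_G^{2m})=2\tr\bigl((\overline A A)^m\bigr)=2\,n_\beta(G)\) with \(\beta=(0,1,0,1,\ldots,0,1)\) of length \(2m\). Proposition~\ref{prop:beta-invariance} gives \(n_\beta(G)=n_\beta(H)\), so all power traces agree. Alternatively, Theorem~\ref{thm:JN} gives the conjugation directly: \(\operatorname{diag}(Q,Q)^{\top}T_G\operatorname{diag}(Q,Q)=T_H\), because \(Q\) simultaneously conjugates \(A\) and \(\overline A\). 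This shows at once that \(T_G\) and \(T_H\) are similar, so \(\chi_{T_G}=\chi_{T_H}\). I would present this one-line similarity argument and mention the trace computation only as a remark.

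Then \(\chi_{T_H}=\chi_{T_G}\) is a square in \(\Z[z]\) by the forward direction of Theorem~\ref{thm:square-criterion}, and the converse direction of that theorem shows that \(H\) is split.

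No step is a real obstacle, since the substantive work sits in Theorem~\ref{thm:square-criterion}. The only point to check is that the block-diagonal orthogonal matrix conjugates the off-diagonal blocks correctly: the \((1,2)\) block of the conjugate is \(Q^{\top}\overline A Q=A(\overline H)\), and the \((2,1)\) block is \(Q^{\top}AQ=A(H)\).
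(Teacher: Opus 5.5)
Your proposal is correct and is essentially the paper's proof: conjugate $T_G$ by $\operatorname{diag}(Q,Q)$, with $Q$ from Theorem~\ref{thm:JN}, to get $T_H$, so that $\chi_{T_G}=\chi_{T_H}$, and then Theorem~\ref{thm:square-criterion} transfers splitness. Your alternative argument is also valid though not needed: it uses $\operatorname{tr}(T_G^{2m})=2n_\beta(G)$ with $\beta=(0,1,\ldots,0,1)$, the vanishing of the odd traces, and Newton's identities.
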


\begin{proof}
Let \(G\) and \(H\) be generalized cospectral, and let \(Q\) be supplied by Theorem~\ref{thm:JN}. With \(T_G,T_H\) as in \eqref{eq:TG} and
\(\widehat Q:=\operatorname{diag}(Q,Q)\),
\[
 \widehat Q^{\top}T_G\widehat Q
 =\begin{pmatrix}
 0&Q^{\top}A(\overline G)Q\\
 Q^{\top}A(G)Q&0
 \end{pmatrix}
 =T_H.
\]
Thus \(T_G\) and \(T_H\) have the same characteristic polynomial. Theorem~\ref{thm:square-criterion} implies that \(G\) is split if and only if \(H\) is split.
\end{proof}

Theorem~\ref{thm:generalized-closure} proves Theorem~\ref{thm:main}\textup{(i)}.

\section{A normal form for trace-word parameters}

For a graph \(G\) with adjacency matrix \(A\), define \(p_r(G):=\tr(A^r)\) and \(q_r(G):=\one^{\top}A^r\one\) for \(r\geq0\). Thus \(p_r\) counts closed walks of length \(r\), whereas \(q_r\) counts all walks of length \(r\) with arbitrary initial and terminal vertices. In particular, \(p_0=q_0=|V(G)|\), \(p_1=0\), and \(p_2=q_1\).

Write \(\homc(F,G)\) for the number of graph homomorphisms from \(F\) to \(G\). If \(\lambda=(\lambda_1,\ldots,\lambda_s)\) is a finite nonempty multiset of nonnegative integers, set \(Q_\lambda(G):=\prod_{i=1}^s q_{\lambda_i}(G)\). We shall use the following classical independence theorem; see
\cite[Corollary~5.45]{LovaszBook}.

\begin{theorem}[Lov\'asz]\label{thm:lovasz-independence}
For pairwise nonisomorphic finite simple graphs \(F_1,\ldots,F_t\), the graph parameters \(G\mapsto\homc(F_i,G)\) are linearly independent over \(\R\).
\end{theorem}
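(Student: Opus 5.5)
The plan is to reduce the statement to the invertibility of a finite square matrix. Let \(N:=\max_i|V(F_i)|\), and let \(\mathcal S\) be a set of representatives of the isomorphism classes of simple graphs on at most \(N\) vertices (allowing the empty graph is harmless). Let \(M\) be the \(\mathcal S\times\mathcal S\) matrix with entries \(M[F,G]:=\homc(F,G)\). Each \(F_i\) is isomorphic to exactly one element of \(\mathcal S\), and distinct \(F_i\) correspond to distinct rows. So if \(\sum_i c_i\homc(F_i,\cdot)\equiv0\), then a linear combination of rows of \(M\) vanishes when evaluated on all columns \(G\in\mathcal S\). It therefore suffices to show that \(M\) is invertible.

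To show this, I would factor every homomorphism through its image. Let \(\mathrm{surj}(F,H)\) be the number of homomorphisms \(F\to H\) that are surjective on vertices and on edges, and let \(\operatorname{inj}(H,G)\) be the number of injective homomorphisms \(H\to G\). A homomorphism \(\varphi\colon F\to G\) determines the graph \(\varphi(F)\subseteq G\), with vertex set \(\varphi(V(F))\) and edge set \(\varphi(E(F))\). Choosing an isomorphism of this image with some \(H\in\mathcal S\) overcounts each \(\varphi\) exactly \(|\mathrm{Aut}(H)|\) times. This gives
\[
 \homc(F,G)=\sum_{H\in\mathcal S}\frac{\mathrm{surj}(F,H)\,\operatorname{inj}(H,G)}{|\mathrm{Aut}(H)|}.
\]
The sum stays inside \(\mathcal S\) because \(|V(H)|\leq|V(F)|\leq N\). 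In matrix form, \(M=S\,D^{-1}\,I\) with \(D\) the diagonal matrix of automorphism counts.

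Next I would check that \(S\) and \(I\) are each triangular, with respect to possibly different orderings, and have positive diagonals. For \(S\), order \(\mathcal S\) by \(|V|\), breaking ties by \(-|E|\). A surjective homomorphism \(F\to H\) forces \(|V(H)|\leq|V(F)|\). If equality holds, the map is a bijection on vertices that is also surjective on edges, so \(|E(H)|\le|E(F)|\); since a homomorphism maps the \(|E(F)|\) edges injectively when bijective on vertices, in fact \(|E(H)|=|E(F)|\) and \(H\cong F\). Hence \(S\) is triangular with diagonal \(|\mathrm{Aut}(F)|>0\). For \(I\), order by \((|V|,|E|)\). An injective homomorphism \(H\to G\) forces \(|V(H)|\leq|V(G)|\), and in case of equality \(|E(H)|\leq|E(G)|\), with equality only if \(H\cong G\). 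Hence \(I\) is triangular with diagonal \(|\mathrm{Aut}(G)|>0\). A triangular matrix in some ordering is just a permuted triangular matrix, so \(\det S\neq0\) and \(\det I\neq0\), and thus \(\det M\neq0\). This yields the required linear independence, even when the parameters are restricted to graphs on at most \(N\) vertices.

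The main obstacle is bookkeeping rather than depth. The image factorization must be exactly right: "surjective" has to mean surjective on both vertices and edges, otherwise the sum double counts. The equal-order cases of the two triangularity arguments also need care, since that is where pairwise nonisomorphism of the \(F_i\) is actually used. I would check these two points first, and then the rest is routine.
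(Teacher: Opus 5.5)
Your proof is correct. The paper does not prove this statement; it imports it from Lov\'asz's book (Corollary~5.45). You instead give a self-contained argument along the standard lines, and it holds up.

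On the finitely many isomorphism types with at most $N$ vertices, you factor
\[
[\homc(F,G)] \;=\; \mathrm{surj}\cdot\operatorname{diag}(|\mathrm{Aut}(H)|)^{-1}\cdot\mathrm{inj}.
\]
Each outer factor is triangular, with diagonal entries $|\mathrm{Aut}|>0$, after one simultaneous reordering of its rows and columns. The counting is right. Pairs $(\sigma,\iota)$ with $\iota\circ\sigma=\varphi$ correspond exactly to isomorphisms $H\to\varphi(F)$, and this correspondence needs $\sigma$ to be edge-surjective, which you insist on. The two equal-order cases are also handled correctly:
\begin{itemize}
\item a vertex-bijective homomorphism is injective on edges;
\item an injective homomorphism with matching vertex and edge counts is an isomorphism.
\end{itemize}

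The citation buys brevity. Your route makes the paper independent of the reference. It also gives the slightly stronger fact that the parameters are already independent when evaluated only on graphs with at most $\max_i|V(F_i)|$ vertices.

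Three cosmetic points:
\begin{itemize}
\item The $-|E|$ tie-break for the surjection matrix is unnecessary, since $|V(H)|=|V(F)|$ already forces $H\cong F$.
\item Naming the injection matrix $I$ collides with the identity matrix.
\item The paper assumes all graphs are nonempty, so you can take $\mathcal S$ to consist of nonempty graphs only. The factorization never leaves this set, because the image of a nonempty graph is nonempty.
\end{itemize}
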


\begin{lemma}\label{lem:normal-form}
Every \(\Phi\in\mathcal T\) has a unique representation
\begin{equation*}
 \Phi(G)=\sum_{r=3}^{R}c_rp_r(G)+\sum_{\lambda\in\Lambda}d_\lambda Q_\lambda(G),
\end{equation*}
where \(R\) is finite and \(\Lambda\) is a finite set of finite nonempty multisets of nonnegative integers.
\end{lemma}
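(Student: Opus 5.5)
The plan is to prove existence by expanding each word trace algebraically, and uniqueness by reading the resulting parameters as homomorphism counts and invoking Theorem~\ref{thm:lovasz-independence}.

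\emph{Existence.} It suffices to treat a single generator \(n_\beta\), because \(\mathcal T\) is a linear span. Write \(M_0(G)=J-I-A\) and \(M_1(G)=A\), and expand the product \(M_{\beta_1}\cdots M_{\beta_k}\) multilinearly. This expresses \(n_\beta(G)\) as an integer combination, with coefficients independent of \(G\), of traces of words in the three letters \(A\), \(I\), \(J\). The letters \(I\) can be deleted. A word with no \(J\) gives \(\tr(A^r)=p_r\), and the small cases are handled by \(p_0=q_0\), \(p_1=0\) and \(p_2=q_1\). If a word contains \(m\geq1\) copies of \(J=\one\one^\top\), rotate it cyclically inside the trace so that it ends with \(J\). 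It then has the form \(A^{a_1}JA^{a_2}J\cdots A^{a_m}J\) with \(a_i\geq0\), and
\[
\tr\bigl(A^{a_1}\one\one^\top A^{a_2}\one\one^\top\cdots A^{a_m}\one\one^\top\bigr)=\prod_{i=1}^m \one^\top A^{a_{i+1}}\one ,
\]
with indices read cyclically. This product equals \(Q_\lambda(G)\) for the multiset \(\lambda=\{a_1,\ldots,a_m\}\). Collecting terms gives the claimed form with finite \(R\) and finite \(\Lambda\).

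\emph{Uniqueness.} I will show that the parameters \(p_r\) (\(r\geq3\)) and \(Q_\lambda\) (\(\lambda\) a finite nonempty multiset) are linearly independent. The identification with homomorphism counts is:
\begin{itemize}[leftmargin=2.4em]
\item[] \(q_r(G)=\homc(P_{r+1},G)\), since walks of length \(r\) are homomorphisms of the path on \(r+1\) vertices (including \(q_0=\homc(K_1,G)\));
\item[] \(p_r(G)=\homc(C_r,G)\) for \(r\geq3\);
\item[] \(Q_\lambda(G)=\homc(F_\lambda,G)\), where \(F_\lambda=\bigsqcup_i P_{\lambda_i+1}\), since homomorphism counts are multiplicative over disjoint unions.
\end{itemize}
These graphs are pairwise nonisomorphic. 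The \(C_r\) with distinct \(r\) differ in order. No cycle is a forest, whereas every \(F_\lambda\) is a forest. Each \(F_\lambda\) determines \(\lambda\), because its components are paths whose orders recover the entries \(\lambda_i+1\) with multiplicity. Theorem~\ref{thm:lovasz-independence} then gives linear independence, and hence the coefficients \(c_r\), \(d_\lambda\) are uniquely determined. Terms with zero coefficient are discarded, so \(\Lambda\) and \(R\) are understood up to zero terms.

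\emph{Main obstacle.} The argument is routine bookkeeping. The step needing most care is the cyclic-product identity when several \(J\)'s are adjacent, which produces \(a_i=0\) and factors \(q_0=n\). Keeping the multiset \(\lambda\) correct there, including zeros, is what makes the correspondence \(\lambda\leftrightarrow F_\lambda\) bijective, since a zero entry corresponds to an isolated vertex \(K_1\).
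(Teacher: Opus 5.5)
Your proposal is correct and follows essentially the same route as the paper. For existence, you expand \(\overline A=J-I-A\) multilinearly and use cyclicity of trace to factor every word containing \(J=\one\one^{\top}\) into a product of the \(q_{r_i}\); for uniqueness, you identify \(p_r\) and \(Q_\lambda\) with homomorphism counts of cycles and linear forests and apply Theorem~\ref{thm:lovasz-independence}. Your extra care with adjacent \(J\)'s is correct: they give zero entries of \(\lambda\), that is, isolated vertices of \(F_\lambda\), a detail the paper leaves implicit.
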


\begin{proof}
Write \(\overline A=J-I-A\), where \(J=\one\one^{\top}\), and expand each trace word \(n_\beta\) into traces of words in \(A,J,I\). After deleting factors \(I\), a term with no occurrence of \(J\) is \(\tr(A^r)=p_r\). The cases \(r=0,1,2\) are absorbed by \(p_0=q_0\), \(p_1=0\), and \(p_2=q_1\).

If a term has \(s\geq1\) occurrences of \(J\), cyclicity of trace puts it in the form \(\tr(A^{r_1}JA^{r_2}J\cdots A^{r_s}J)\). Since \(J=\one\one^{\top}\),
\begin{equation*}
 \tr(A^{r_1}JA^{r_2}J\cdots A^{r_s}J)
 =\prod_{i=1}^s\one^{\top}A^{r_i}\one
 =\prod_{i=1}^s q_{r_i}.
\end{equation*}
This proves existence.

For uniqueness, observe that \(p_r(G)=\homc(C_r,G)\) for \(r\geq3\), while \(q_r(G)=\homc(P_{r+1},G)\), with \(P_1\) the one-vertex graph. Since homomorphism counts multiply over disjoint unions,
\[
 Q_\lambda(G)=\homc\!\left(\bigsqcup_{i=1}^sP_{\lambda_i+1},G\right).
\]
Distinct multisets \(\lambda\) produce pairwise nonisomorphic linear forests, and no such forest is isomorphic to a cycle \(C_r\) with \(r\geq3\). Theorem~\ref{thm:lovasz-independence} therefore gives uniqueness.
\end{proof}

\section{Two-parameter split blow-ups}

Fix an arbitrary graph \(G\) on \(m\) vertices, put \(A=A(G)\), and set \(X:=I_m+A\). For positive integers \(a,b\), construct a graph \(S_{a,b}(G)\) as follows. For each \(i\in[m]\), take a block \(C_i\) of \(a\) vertices and a block \(D_i\) of \(b\) vertices. Make \(\bigcup_iC_i\) a clique and \(\bigcup_iD_i\) an independent set, and join \(C_i\) completely to \(D_j\) precisely when \(X_{ij}=1\). By construction, \(S_{a,b}(G)\) is split.

For any square matrix \(Y\), write \(p_r(Y):=\tr(Y^r)\) and \(q_r(Y):=\one^{\top}Y^r\one\), where the all-ones vector has the dimension of \(Y\).

\begin{lemma}\label{lem:quotient}
Let \(\one_{C_i}\) and \(\one_{D_i}\) be the characteristic vectors of the indicated blocks, viewed in \(\R^{V(S_{a,b}(G))}\), and put \(e_i^C:=a^{-1/2}\one_{C_i}\) and \(e_i^D:=b^{-1/2}\one_{D_i}\). The vectors
\[
 e_1^C,\ldots,e_m^C,e_1^D,\ldots,e_m^D
\]
form an orthonormal basis of the block-constant subspace. In this basis, the adjacency operator of \(S_{a,b}(G)\) is represented by
\begin{equation}\label{eq:symmetric-quotient}
 \mathcal B_{a,b}(X):=
 \begin{pmatrix}
 aJ_m-I_m&\sqrt{ab}\,X\\
 \sqrt{ab}\,X&0
 \end{pmatrix}.
\end{equation}
The orthogonal complement of that subspace contributes the eigenvalue \(-1\) with multiplicity \(m(a-1)\) and the eigenvalue \(0\) with multiplicity \(m(b-1)\). Consequently, for \(r\geq1\),
\begin{equation}\label{eq:p-quotient}
 p_r(S_{a,b}(G))=\tr\!\left(\mathcal B_{a,b}(X)^r\right)+m(a-1)(-1)^r,
\end{equation}
and, for \(r\geq0\),
\begin{equation*}
 q_r(S_{a,b}(G))=z_{a,b}^{\top}\mathcal B_{a,b}(X)^rz_{a,b},
 \qquad z_{a,b}:=\binom{\sqrt a\,\one_m}{\sqrt b\,\one_m}.
\end{equation*}
\end{lemma}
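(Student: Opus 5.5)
We prove Lemma~\ref{lem:quotient} by an equitable-partition argument for the partition \(\{C_1,\ldots,C_m,D_1,\ldots,D_m\}\) of \(V(S_{a,b}(G))\), followed by a direct analysis of the orthogonal complement. Let \(\mathcal A\) be the adjacency matrix of \(S_{a,b}(G)\). The vectors \(e_i^C,e_i^D\) have disjoint supports and unit norm, so they are orthonormal. They span the space of vectors constant on each block, so they form a basis of it.

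We first compute \(\mathcal A\) on these vectors, one vertex type at a time. For a vertex \(v\in C_k\), its neighbours in \(\bigcup_iC_i\) are all clique vertices except \(v\). Its neighbours in the independent side are \(\bigcup_{j:X_{kj}=1}D_j\). Hence \((\mathcal A\one_{C_i})_v=a-\delta_{ik}\) and \((\mathcal A\one_{D_j})_v=bX_{kj}\). For \(v\in D_k\), the only neighbours are in clique blocks \(C_i\) with \(X_{ik}=1\), using the symmetry of \(X\). So \((\mathcal A\one_{C_i})_v=aX_{ik}\) and \((\mathcal A\one_{D_j})_v=0\). Rewriting these in the normalized basis gives the following. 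The \(C\)–\(C\) block has entries \(a^{-1/2}\cdot a^{1/2}(a-\delta_{ik})=a-\delta_{ik}\), which is \(aJ_m-I_m\). The off-diagonal blocks have entries \(\sqrt{ab}\,X_{ik}\). The \(D\)–\(D\) block is \(0\). This is exactly \eqref{eq:symmetric-quotient}, and in particular the block-constant subspace is \(\mathcal A\)-invariant.

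Since \(\mathcal A\) is symmetric, the orthogonal complement \(W\) of the block-constant subspace is also invariant. \(W\) consists of the vectors whose restriction to each block sums to zero. Let \(w\) be supported on \(C_k\) with zero sum. For \(v\notin C_k\), every vertex is adjacent either to all of \(C_k\) or to none of it, so \((\mathcal A w)_v=0\). For \(v\in C_k\), we get \((\mathcal Aw)_v=\sum_{u\in C_k\setminus\{v\}}w_u=-w_v\). So \(\mathcal Aw=-w\). Similarly, for \(w\) supported on \(D_k\) with zero sum, \(\mathcal Aw=0\), because \(D_k\) is independent and every other vertex sees all of \(D_k\) or none. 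These zero-sum spaces have dimensions \(a-1\) and \(b-1\) per block, and together they span \(W\). This gives the eigenvalue \(-1\) with multiplicity \(m(a-1)\) and the eigenvalue \(0\) with multiplicity \(m(b-1)\).

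The trace formula \eqref{eq:p-quotient} follows by splitting \(\tr(\mathcal A^r)\) over the invariant orthogonal decomposition. The block-constant part contributes \(\tr(\mathcal B_{a,b}(X)^r)\). The part \(W\) contributes \(m(a-1)(-1)^r+m(b-1)\cdot0^r\), and the second term vanishes for \(r\geq1\); this is why that restriction is needed. For \(q_r\), note that \(\one=\sum_i(\one_{C_i}+\one_{D_i})=\sum_i(\sqrt a\,e_i^C+\sqrt b\,e_i^D)\) lies in the block-constant subspace, with coordinate vector \(z_{a,b}\). Because the basis is orthonormal and the subspace is invariant, \(\one^{\top}\mathcal A^r\one=z_{a,b}^{\top}\mathcal B_{a,b}(X)^rz_{a,b}\) for all \(r\geq0\).

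There is no real obstacle here. The only points needing care are the diagonal correction \(-I_m\) in the clique block, which comes from the absence of loops, and using the symmetry of \(X\) to check that both off-diagonal blocks equal \(\sqrt{ab}\,X\) rather than one of them being \(X^{\top}\).
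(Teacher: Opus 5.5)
Your proof is correct and follows the paper's route. You compute the quotient on the normalized block indicators, show that zero-sum vectors on each \(C_i\) (resp.\ \(D_i\)) are eigenvectors for \(-1\) (resp.\ \(0\)), and split the traces and the quadratic form over the invariant orthogonal decomposition. You give more detail than the paper, including the explicit neighbourhood computations and the remark that symmetry of \(X\) makes the lower-left block \(\sqrt{ab}\,X\) rather than \(\sqrt{ab}\,X^{\top}\).
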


\begin{proof}
The matrix entry between \(e_i^C\) and \(e_j^C\) is \(a\) for \(i\neq j\) and \(a-1\) for \(i=j\), giving \(aJ_m-I_m\). The entry between \(e_i^C\) and \(e_j^D\) is \(\sqrt{ab}\,X_{ij}\), giving the off-diagonal blocks.

A vector supported on one \(C_i\) and summing to zero is multiplied by \(-1\); all interactions with other blocks vanish because they depend only on the block sum. These vectors have total dimension \(m(a-1)\). Similarly, vectors supported on one \(D_i\) and summing to zero are annihilated and have total dimension \(m(b-1)\). Finally, the all-ones vector has coordinates \(z_{a,b}\) in the displayed orthonormal basis. The two formulas follow.
\end{proof}

\begin{lemma}\label{lem:polynomial-dependence}
For every fixed \(G\) and every \(r\geq0\), both \(p_r(S_{a,b}(G))\) and \(q_r(S_{a,b}(G))\), initially defined for positive integers \(a,b\), are restrictions of polynomials in \(\R[a,b]\). The same is true of \(\Phi(S_{a,b}(G))\) for every \(\Phi\in\mathcal T\).
\end{lemma}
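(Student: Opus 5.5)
The plan is to use Lemma~\ref{lem:quotient} and show that the square roots \(\sqrt{ab}\), \(\sqrt a\), \(\sqrt b\) cancel in every matrix product that occurs. For \(r\ge1\), \eqref{eq:p-quotient} gives \(p_r(S_{a,b}(G))=\tr(\mathcal B_{a,b}(X)^r)+m(a-1)(-1)^r\). The correction term is already polynomial in \(a\). For \(p_0\) we have \(p_0=q_0=m(a+b)\), which is also polynomial.

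For the trace, conjugate \(\mathcal B_{a,b}(X)\) by \(\operatorname{diag}(I_m,\sqrt{ab}\,I_m)\). This gives the similar matrix
\[
 \mathcal B'_{a,b}:=\begin{pmatrix} aJ_m-I_m & ab\,X\\ X & 0\end{pmatrix},
\]
whose entries lie in \(\Z[a,b]\). Similarity preserves the trace of every power, so \(\tr(\mathcal B_{a,b}^r)=\tr(\mathcal B'^{\,r}_{a,b})\), and this is a polynomial in \(a,b\).

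For the walk count \(q_r\), use the same conjugation. Let \(D=\operatorname{diag}(I_m,\sqrt{ab}\,I_m)\), so that \(\mathcal B_{a,b}=D\mathcal B'_{a,b}D^{-1}\). Then
\[
 q_r=z_{a,b}^{\top}D\,\mathcal B'^{\,r}_{a,b}\,D^{-1}z_{a,b}.
\]
Compute the two outer vectors:
\[
 D z_{a,b}=\binom{\sqrt a\,\one}{b\sqrt a\,\one},\qquad D^{-1}z_{a,b}=\binom{\sqrt a\,\one}{a^{-1/2}\one}.
\]
The product of their scalar prefactors is not immediately polynomial, so I would instead avoid the quotient entirely and count walks directly in \(S_{a,b}(G)\).

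Write \(u=(\one_m,0)^{\top}\) and \(w=(0,\one_m)^{\top}\) for the vectors indexing the clique blocks and the independent blocks. Consider the unnormalized quotient matrix \(N\) of the equitable partition \(\{C_i,D_i\}\):
\[
 N=\begin{pmatrix} aJ_m-I_m & bX\\ aX & 0\end{pmatrix},
\]
whose \((i,j)\) entry counts the neighbours of a vertex of block \(i\) lying in block \(j\). For an equitable partition, \(A(S)\,P=P N\), where \(P\) is the characteristic matrix of the partition. Hence
\[
 q_r=\one^{\top}A^rP\one=\one^{\top}PN^r\one=s^{\top}N^r\one,
\]
where \(s=(a\one_m,b\one_m)^{\top}\) records the block sizes. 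Every entry of \(N\) and of \(s\) lies in \(\Z[a,b]\), so \(q_r\) is a polynomial for all \(r\ge0\). The same device handles \(p_r\) as well, since \(\tr(\mathcal B^r)=\tr(N^r)\): the matrices \(N\) and \(\mathcal B\) are similar via the diagonal scaling \(\operatorname{diag}(\sqrt a\,I,\sqrt b\,I)\). I regard the check of this similarity and of the equitable-partition identity as the only real obstacle, and it is a direct entrywise verification.

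Finally, take any \(\Phi\in\mathcal T\). By Lemma~\ref{lem:normal-form}, \(\Phi\) is a finite linear combination of the \(p_r\) and of products \(Q_\lambda\) of the \(q_r\). Sums and products of polynomials are polynomials, so \(\Phi(S_{a,b}(G))\) is the restriction of an element of \(\R[a,b]\). Uniqueness of that polynomial holds because a polynomial that vanishes on all positive integer pairs is zero.
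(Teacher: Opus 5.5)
Your proof is correct and is essentially the paper's own argument. Your equitable-partition quotient $N$ is exactly the paper's $\mathcal Q_{a,b}(X)$, similar to $\mathcal B_{a,b}(X)$ via $\operatorname{diag}(\sqrt a\,I_m,\sqrt b\,I_m)$, and your formulas $q_r=s^{\top}N^r\one$ and, for $r\geq1$, $p_r=\tr(N^r)+m(a-1)(-1)^r$, together with the appeal to Lemma~\ref{lem:normal-form}, match the paper. Incidentally, the detour you abandoned also works: with your $D=\operatorname{diag}(I_m,\sqrt{ab}\,I_m)$ one has $Dz_{a,b}=\sqrt a\binom{\one_m}{b\one_m}$ and $D^{-1}z_{a,b}=a^{-1/2}\binom{a\one_m}{\one_m}$, so the scalar prefactors cancel and $q_r$ is already polynomial there.
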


\begin{proof}
On block-constant vectors represented by their constant values on the blocks, the same adjacency operator has the unnormalized quotient
\[
 \mathcal Q_{a,b}(X):=
 \begin{pmatrix}
 aJ_m-I_m&bX\\
 aX&0
 \end{pmatrix}.
\]
The matrix \(\mathcal Q_{a,b}(X)\) has polynomial entries and is similar to \(\mathcal B_{a,b}(X)\). Indeed,
\[
 D:=\operatorname{diag}(\sqrt a I_m,\sqrt b I_m),
 \qquad
 \mathcal B_{a,b}(X)=D\mathcal Q_{a,b}(X)D^{-1}.
\]
Hence, for \(r\geq1\),
\[
 p_r(S_{a,b}(G))=\tr(\mathcal Q_{a,b}(X)^r)+m(a-1)(-1)^r.
\]
Moreover,
\[
 q_r(S_{a,b}(G))=
 \begin{pmatrix}a\one_m^{\top}&b\one_m^{\top}\end{pmatrix}
 \mathcal Q_{a,b}(X)^r
 \binom{\one_m}{\one_m}.
\]
These expressions are polynomial in \(a,b\); the case \(r=0\) is \(m(a+b)\). The final assertion follows from the normal form in Lemma~\ref{lem:normal-form}.
\end{proof}

For a polynomial \(P(a,b)\), let \([a^ub^v]P\) denote the coefficient of \(a^ub^v\). For \(r\geq0\), define
\begin{equation}\label{eq:R-def}
 R_{2r}(X):=q_{2r}(X),\qquad
 R_{2r+1}(X):=2q_{2r+1}(X)+\sum_{h=0}^{r-1}q_{2h+1}(X)q_{2r-2h-1}(X),
\end{equation}
where the sum is empty for \(r=0\).

\begin{lemma}\label{lem:leading-coefficients}
For every \(r\geq1\),
\begin{align}
 [b^r]p_{2r}(S_{a,b}(G))&=2a^rp_{2r}(X),\label{eq:p-even-leading}\\
 [b^r]p_{2r+1}(S_{a,b}(G))&=(2r+1)a^r\bigl(aq_{2r}(X)-p_{2r}(X)\bigr).\label{eq:p-odd-leading}
\end{align}
For every \(r\geq0\),
\begin{align}
 [b^{r+1}]q_{2r}(S_{a,b}(G))&=a^rq_{2r}(X),\label{eq:q-even-leading}\\
 [b^{r+1}]q_{2r+1}(S_{a,b}(G))&=a^{r+1}R_{2r+1}(X)-ra^rq_{2r}(X).\label{eq:q-odd-leading}
\end{align}
In each formula, the displayed power of \(b\) is the largest one that occurs.
\end{lemma}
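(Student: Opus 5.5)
We expand the powers of the unnormalized quotient \(\mathcal Q_{a,b}(X)\) of Lemma~\ref{lem:polynomial-dependence} as sums over words, and track the \(b\)-degree combinatorially. Write \(\mathcal Q=\begin{pmatrix}K&bX\\ aX&0\end{pmatrix}\) with \(K:=aJ_m-I_m\).

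A product of \(r'\) blocks corresponds to a walk on the two-state alphabet \(\{C,D\}\). A \(C\to C\) step uses \(K\), a \(C\to D\) step contributes \(bX\), and a \(D\to C\) step contributes \(aX\). There are no \(D\to D\) steps, so visits to \(D\) are isolated. Each visit to \(D\) costs one factor \(b\), and in a closed word of length \(N\) there are at most \(\lfloor N/2\rfloor\) of them.

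\emph{Even case \(N=2r\), trace.} The maximum \(b\)-degree \(r\) is attained only by the two alternating cyclic words \(CDCD\cdots\) and \(DCDC\cdots\). Each has trace \(\tr((abX^2)^r)\), so together they contribute \(2a^rb^rp_{2r}(X)\). The correction term \(m(a-1)(-1)^r\) has \(b\)-degree \(0<r\). This gives \eqref{eq:p-even-leading}.

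\emph{Odd case \(N=2r+1\), trace.} A closed word with \(r\) visits to \(D\) contains exactly one \(K\)-step. There are \(2r+1\) cyclic positions for it, and by cyclicity each such word has trace \(\tr(K(abX^2)^r)\). Expanding \(K=aJ-I\) gives \(a^rb^r\bigl(a\,q_{2r}(X)-p_{2r}(X)\bigr)\), and multiplying by \(2r+1\) yields \eqref{eq:p-odd-leading}.

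\emph{Walk counts.} For \(q_N(S_{a,b})=(a\one^\top,b\one^\top)\,\mathcal Q^N\binom{\one}{\one}\), the left vector can add one more factor \(b\) when the word starts at \(D\).

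For \(N=2r\), the top degree \(b^{r+1}\) forces a start at \(D\) together with \(r\) visits to \(D\) among the \(2r\) steps. The word must then be \(D\,C\,D\,C\cdots D\), ending at \(D\). Its value is \(b\cdot(aX\,bX)^r\), giving \(a^rb^{r+1}q_{2r}(X)\), which is \eqref{eq:q-even-leading}.

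For \(N=2r+1\), we need a start at \(D\) plus \(r\) further \(D\)-visits in a word of \(2r+2\) letters with no two consecutive \(D\)'s. Such words are: start at \(D\), alternate, and insert exactly one \(CC\)-step, i.e.\ one factor \(K\), somewhere; or start at \(D\), alternate perfectly, and end at \(C\).

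The perfectly alternating word contributes \(b\cdot\one^\top(aX\,bX)^r aX\one=a^{r+1}b^{r+1}q_{2r+1}(X)\). The words with one \(K\) contribute \(b^{r+1}a^{r}\,\one^\top X(X^2)^{h}\,(aJ-I)\,(X^2)^{r-1-h}X\one\) for \(h=0,\dots,r-1\), in \(r\) positions. Expanding \(aJ-I\), the \(J\) part factors as \(a\,q_{2h+1}q_{2r-2h-1}\) and the \(-I\) part gives \(-q_{2r}\) per position, hence \(-ra^rq_{2r}(X)\).

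The main obstacle is the bookkeeping here. We must make sure that every admissible word is enumerated exactly once, including any word ending at \(C\) after the \(K\)-step. We must also check that the coefficients of \(a^{r+1}q_{2r+1}\) assemble to the factor \(2\) in \(R_{2r+1}\), presumably from the two endings (at \(D\) versus at \(C\), the latter using \(X\one\) on the right versus \(\one\) weighting). I would carefully recount by classifying words by their start and end letters and by the position of the unique \(K\) or the absence of one.

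Finally, maximality of the displayed powers of \(b\) follows from the counting bound on \(D\)-visits established at the start.
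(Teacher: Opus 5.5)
Your trace formulas and the \(q_{2r}\) formula are correct, and they follow the paper's block-expansion argument. Working with the unnormalized quotient \(\mathcal Q_{a,b}(X)\) instead of the symmetric \(\mathcal B_{a,b}(X)\) is harmless. The gap is in the \(q_{2r+1}\) case.

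\textbf{The missing word.} Your claim that reaching \(b\)-degree \(r+1\) ``needs a start at \(D\)'' is false. You are applying the closed-word bound \(\lfloor N/2\rfloor\) to an open walk. For \(q_N\), the \(b\)-degree of a word \(s_0s_1\cdots s_N\) equals the number of \(D\)'s among all \(N+1\) letters. The starting letter contributes through the left vector \((a\one^\top,b\one^\top)\), and every later \(D\) contributes through the block \(bX\) that enters it. For \(N=2r+1\) you need \(r+1\) pairwise nonadjacent \(D\)'s among \(2r+2\) letters, so there is exactly one surplus \(C\). It can sit at the front, at the back, or in one of the \(r\) internal gaps. The front placement gives the word \((CD)^{r+1}\), which starts at \(C\) and contributes \(a\cdot\one^\top(bX\,aX)^r bX\one=a^{r+1}b^{r+1}q_{2r+1}(X)\). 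Your enumeration omits this word.

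\textbf{Consequence.} As written, your computation gives \(a^{r+1}\bigl(q_{2r+1}(X)+\sum_h q_{2h+1}(X)q_{2r-2h-1}(X)\bigr)-ra^rq_{2r}(X)\). This falls short of \eqref{eq:q-odd-leading} by exactly \(a^{r+1}q_{2r+1}(X)\).

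Your suggested source for the factor \(2\), namely two different endings of start-\(D\) words, does not work. A start-\(D\) word ending in \(CC\) has at most \(r\) letters \(D\), so it never reaches top degree. The only top-degree start-\(D\) word ending at \(C\) is the alternating word \((DC)^{r+1}\), which you already counted. The factor \(2\) in \(R_{2r+1}\) comes from the two alternating words \((CD)^{r+1}\) and \((DC)^{r+1}\). These are the ``two endpoint orientations'' in the paper's proof.

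\textbf{The maximality claim.} Your final sentence needs the same fix. For \(q_N\), the relevant bound is at most \(\lceil (N+1)/2\rceil\) nonadjacent \(D\)'s among \(N+1\) letters. This equals \(r+1\) for both \(N=2r\) and \(N=2r+1\). The closed-word bound you cite does not apply here.
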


\begin{proof}
With \(\mathcal B_{a,b}(X)\) as in~\eqref{eq:symmetric-quotient}, put \(C:=aJ_m-I_m\) and \(t:=\sqrt{ab}\). In the block expansion of a matrix power, a passage between the clique side and the independent side contributes \(t\), while a step remaining on the independent side contributes zero.

For \(p_{2r}\), the largest possible number of passages is \(2r\), attained by alternating sides throughout. There are two choices of starting side, so the contribution of maximal \(b\)-degree is \(2(ab)^r\tr(X^{2r})\), proving \eqref{eq:p-even-leading}.

For \(p_{2r+1}\), a term of maximal \(b\)-degree has \(2r\) passages and exactly one clique-side step \(C\). That step has \(2r+1\) cyclic positions, giving \((2r+1)(ab)^r\tr(CX^{2r})\). Since \(\tr(CX^{2r})=a\tr(J_mX^{2r})-\tr(X^{2r})=aq_{2r}(X)-p_{2r}(X)\), this is \eqref{eq:p-odd-leading}. The correction term in \eqref{eq:p-quotient} has \(b\)-degree zero.

For \(q_{2r}\), maximal \(b\)-degree is obtained by starting and ending on the independent side and alternating at every step. The endpoint factors contribute \(b\), while the \(2r\) passages contribute \((ab)^r\), proving \eqref{eq:q-even-leading}.

For \(q_{2r+1}\), there are exactly two types of terms of \(b\)-degree \(r+1\). If the endpoints lie on opposite sides and every step alternates, the two endpoint orientations contribute \(2(ab)^{r+1}q_{2r+1}(X)\). If both endpoints lie on the independent side, there must be \(2r\) passages and one clique-side step \(C\). These terms contribute
\[
 a^rb^{r+1}\sum_{h=0}^{r-1}
 \one^{\top}X^{2h+1}CX^{2r-2h-1}\one.
\]
For each \(h\),
\[
 \one^{\top}X^{2h+1}CX^{2r-2h-1}\one
 =a q_{2h+1}(X)q_{2r-2h-1}(X)-q_{2r}(X).
\]
Adding both types gives \eqref{eq:q-odd-leading}. The alternating description also shows in each case that no larger power of \(b\) is possible.
\end{proof}

\section{Vanishing on split graphs forces universal vanishing}

For a finite nonempty multiset \(\lambda=(\lambda_1,\ldots,\lambda_s)\), define
\begin{equation}\label{eq:DETheta}
 D(\lambda):=\sum_{i=1}^s\left(\left\lfloor\frac{\lambda_i}{2}\right\rfloor+1\right),
 \qquad
 E(\lambda):=\sum_{i=1}^s\left\lceil\frac{\lambda_i}{2}\right\rceil,
 \qquad
 \Theta_\lambda(X):=\prod_{i=1}^sR_{\lambda_i}(X).
\end{equation}
Notice that \(E(\lambda)\leq D(\lambda)\).

\begin{lemma}\label{lem:Q-leading}
With the notation of \eqref{eq:DETheta}, the largest \(b\)-degree in
\(Q_\lambda(S_{a,b}(G))\) is \(D(\lambda)\). Within the coefficient of \(b^{D(\lambda)}\), the largest \(a\)-degree is \(E(\lambda)\), and
\begin{equation*}
 [a^{E(\lambda)}b^{D(\lambda)}]Q_\lambda(S_{a,b}(G))=\Theta_\lambda(X).
\end{equation*}
\end{lemma}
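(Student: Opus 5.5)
The plan is to reduce everything to the single-factor statements in Lemma~\ref{lem:leading-coefficients}, and then use multiplicativity of leading coefficients in the integral domain \(\R[a][b]\). Since \(Q_\lambda(S_{a,b}(G))=\prod_{i=1}^s q_{\lambda_i}(S_{a,b}(G))\), and each factor is a polynomial in \(\R[a,b]\) by Lemma~\ref{lem:polynomial-dependence}, we regard each factor as a polynomial in \(b\) with coefficients in \(\R[a]\).

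\emph{Step 1: each factor separately.} Lemma~\ref{lem:leading-coefficients} covers \(k=2r\) (including \(r=0\)) and \(k=2r+1\). In both cases the top \(b\)-degree of \(q_k(S_{a,b}(G))\) is \(r+1=\lfloor k/2\rfloor+1\), and the coefficient of that power is, respectively,
\[
a^rq_{2r}(X)
\qquad\text{or}\qquad
a^{r+1}R_{2r+1}(X)-ra^rq_{2r}(X).
\]
The \(a\)-degree of this leading \(b\)-coefficient is \(\lceil k/2\rceil\) (that is, \(r\), respectively \(r+1\)), and its top \(a\)-coefficient is \(R_k(X)\) by \eqref{eq:R-def}.

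\emph{Step 2: nonvanishing.} Both claims in Step~1 require \(R_k(X)\neq0\), and this is the one point needing care. Since \(X=I_m+A\) is a nonnegative matrix with positive diagonal, every power \(X^j\) has positive diagonal. Hence \(q_j(X)=\one^{\top}X^j\one\geq m>0\) for all \(j\geq0\). Consequently \(R_{2r}(X)=q_{2r}(X)>0\). Also \(R_{2r+1}(X)\geq 2q_{2r+1}(X)>0\), because the added sum in \eqref{eq:R-def} has nonnegative terms. For \(k=0\) this is consistent with the direct computation \(q_0(S_{a,b}(G))=m(a+b)\), whose \(b\)-leading coefficient is \(m=R_0(X)\).

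\emph{Step 3: multiply.} In \(\R[a][b]\), the leading \(b\)-coefficient of a product is the product of the leading \(b\)-coefficients, and it is nonzero since \(\R[a]\) is a domain. So the top \(b\)-degree of \(Q_\lambda(S_{a,b}(G))\) is \(\sum_i(\lfloor\lambda_i/2\rfloor+1)=D(\lambda)\). Its coefficient is the product of the single-factor leading coefficients from Step~1. Applying the same principle in \(\R[a]\), the top \(a\)-degree of that product is \(\sum_i\lceil\lambda_i/2\rceil=E(\lambda)\), with top coefficient \(\prod_iR_{\lambda_i}(X)=\Theta_\lambda(X)\). This is the assertion.

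The only nonformal point is Step~2: the nonvanishing of \(R_k(X)\), which relies on the choice \(X=I+A\).
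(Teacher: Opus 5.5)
Your proposal is correct and follows the paper's proof: apply the single-factor formulas \eqref{eq:q-even-leading}--\eqref{eq:q-odd-leading} to each \(q_{\lambda_i}(S_{a,b}(G))\), then multiply leading coefficients, first in \(b\) and then in \(a\). Your Step~2 adds something the paper leaves implicit: you show \(R_k(X)>0\), using that \(X=I+A\) has positive diagonal. This nonvanishing is exactly what is needed for the degrees to be exactly \(D(\lambda)\) and \(E(\lambda)\) rather than merely upper bounds.
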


\begin{proof}
Apply \eqref{eq:q-even-leading}--\eqref{eq:q-odd-leading} to each
factor \(q_{\lambda_i}(S_{a,b}(G))\). To obtain the largest total
\(b\)-degree in the product, every factor must contribute its own
largest \(b\)-degree. In the resulting product of leading
\(b\)-coefficients, every factor has largest \(a\)-degree
\(\lceil\lambda_i/2\rceil\) and, by \eqref{eq:R-def}, corresponding
coefficient \(R_{\lambda_i}(X)\). Multiplication gives the assertion.
\end{proof}

\begin{lemma}\label{lem:theta-independence}
As \(\lambda\) ranges over finite nonempty multisets of nonnegative integers, the graph parameters
\[
 G\longmapsto\Theta_\lambda(I+A(G))
\]
are linearly independent over \(\R\).
\end{lemma}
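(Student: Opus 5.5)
The plan is to rewrite each \(\Theta_\lambda(I+A(G))\) as a combination of the homomorphism-count products \(Q_\mu(G)=\prod_i q_{\mu_i}(A(G))\) from Lemma~\ref{lem:normal-form}. I will show that this rewriting is triangular with respect to a suitable grading, and then conclude with Theorem~\ref{thm:lovasz-independence}. As in the uniqueness part of Lemma~\ref{lem:normal-form}, \(Q_\mu(G)=\homc(\bigsqcup_i P_{\mu_i+1},G)\), and distinct multisets \(\mu\) give pairwise nonisomorphic linear forests. Hence the parameters \(G\mapsto Q_\mu(G)\) are linearly independent over \(\R\), and any expansion of a parameter in them is unique. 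The grading I will use is the number of edges of the forest, namely \(\|\mu\|:=\sum_i\mu_i\). Vertex count does not work, because the product \(q_{2h+1}q_{2r-2h-1}\) has the same number of vertices as \(q_{2r+1}\).

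First, with \(X=I+A\) and \(A=A(G)\), I expand
\[
 q_k(X)=\one^{\top}(I+A)^k\one=\sum_{j=0}^{k}\binom kj q_j(A).
\]
This equals \(q_k(A)\) plus a combination of \(q_j(A)\) with \(j<k\). Second, I expand \(R_n(X)\) using \eqref{eq:R-def}. For \(n=2r\) the leading term is \(q_{2r}(A)\), and all other terms have fewer edges. For \(n=2r+1\) the leading term is \(2q_{2r+1}(A)\), from \(2q_{2r+1}(X)\). Each product \(q_{2h+1}(X)q_{2r-2h-1}(X)\) expands into products \(q_j(A)q_{j'}(A)\) with \(j+j'\le 2r<2r+1\), so it contributes only forests with strictly fewer edges. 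Thus \(R_n(X)=c_n q_n(A)+(\text{terms }Q_\mu(A)\text{ with }\|\mu\|<n)\), where \(c_n\in\{1,2\}\). The case \(n=0\) is trivial, since \(R_0(X)=q_0(A)=|V(G)|\).

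Third, I multiply these expansions over the parts of \(\lambda\). Edge counts add under multiplication. Therefore
\[
 \Theta_\lambda(X)=2^{o(\lambda)}Q_\lambda(G)+\sum_{\|\mu\|<\|\lambda\|}e_{\lambda,\mu}Q_\mu(G),
\]
where \(o(\lambda)\) is the number of odd parts. The point is that the only term with \(\|\mu\|=\|\lambda\|\) is \(Q_\lambda\) itself.

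Finally, suppose \(\sum_\lambda a_\lambda\Theta_\lambda(I+A(G))=0\) for all \(G\), with finitely many \(a_\lambda\ne0\). I choose \(N\) as the maximal value of \(\|\lambda\|\) among the \(\lambda\) with \(a_\lambda\ne0\). Among terms of edge count \(N\), each \(Q_\lambda\) with \(\|\lambda\|=N\) receives only the contribution \(2^{o(\lambda)}a_\lambda\). By independence of the \(Q_\mu\), all these coefficients vanish, which contradicts the choice of \(N\).

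The main obstacle is the careful check in the second step that the quadratic correction in \(R_{2r+1}\) really drops the edge grading. It drops by exactly one, since \(j+j'\le 2r\). This is also why the edge count, and not the vertex count, must be the triangularity parameter.
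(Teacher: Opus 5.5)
Your proof is correct and follows essentially the same route as the paper: both write \(\Theta_\lambda(I+A(G))\) as the image of the monomial \(\prod_i y_{\lambda_i}\) under the binomial shift composed with the \(R\)-substitution, observe that this substitution is triangular, and finish with Theorem~\ref{thm:lovasz-independence} applied to linear forests. The only difference is how triangularity is certified. The paper orders variables by index and uses the fact that a triangular polynomial map is an automorphism, while you use the edge-count weight \(\sum_i\mu_i\) and read off the explicit leading coefficient \(2^{o(\lambda)}\). Your certificate is equally valid, and your remark that vertex count would not strictly decrease is correct.
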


\begin{proof}
First work in the polynomial ring
\(\R[y_0,y_1,\ldots]\). Define an endomorphism \(T\) by
\[
T(y_{2r})=y_{2r},\qquad
T(y_{2r+1})
=
2y_{2r+1}
+\sum_{h=0}^{r-1}
y_{2h+1}y_{2r-2h-1}
\qquad (r\geq0).
\]
On every finite-variable subring, \(T\) is a triangular polynomial
automorphism: \(T(y_j)\) is a nonzero scalar multiple of \(y_j\) plus
a polynomial in variables of smaller index.

Next define
\[
B(y_r):=\sum_{j=0}^r\binom{r}{j}y_j
\qquad (r\geq0).
\]
Again, \(B\) is triangular with diagonal coefficient one, and hence is
a polynomial automorphism on every finite-variable subring. Therefore
the images under \(B\circ T\) of distinct monomials are linearly
independent as formal polynomials.

For every graph \(G\),
\[
q_r(I+A(G))
=
\sum_{j=0}^r\binom{r}{j}q_j(A(G)).
\]
Consequently, for every finite nonempty multiset
\(\lambda=(\lambda_1,\ldots,\lambda_s)\),
\[
\Theta_\lambda(I+A(G))
=
(B\circ T)\!\left(\prod_{i=1}^s y_{\lambda_i}\right)
\Big|_{y_j=q_j(A(G))}.
\]
It remains to show that distinct monomials in the parameters
\(q_j(A(G))\) are linearly independent as graph parameters. But
\[
\prod_{i=1}^s q_{\lambda_i}(A(G))
=
\homc\!\left(
\bigsqcup_{i=1}^s P_{\lambda_i+1},G
\right),
\]
and distinct multisets \(\lambda\) give pairwise nonisomorphic linear
forests. Theorem~\ref{thm:lovasz-independence} therefore gives the
required linear independence.
\end{proof}

\begin{theorem}\label{thm:split-testing}
Let \(\Phi\in\mathcal T\). If \(\Phi(S)=0\) for every split graph \(S\), then \(\Phi(G)=0\) for every graph \(G\).
\end{theorem}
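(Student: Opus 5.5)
We write \(\Phi\) in the normal form of Lemma~\ref{lem:normal-form},
\[
 \Phi=\sum_{r=3}^{R}c_rp_r+\sum_{\lambda\in\Lambda}d_\lambda Q_\lambda ,
\]
and show that every coefficient \(c_r\) and \(d_\lambda\) vanishes. Uniqueness of the normal form then gives \(\Phi\equiv0\). Fix an arbitrary graph \(G\) with \(X=I+A(G)\). The blow-ups \(S_{a,b}(G)\) are split, so \(\Phi(S_{a,b}(G))=0\) for all positive integers \(a,b\). By Lemma~\ref{lem:polynomial-dependence} this quantity is a polynomial in \(a,b\), so it is the zero polynomial. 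Hence every coefficient \([a^ub^v]\Phi(S_{a,b}(G))\) vanishes, and each of these is a graph parameter in \(G\) that is identically zero. We argue by contradiction and extract from these coefficients a nontrivial linear relation among independent parameters.

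Suppose some coefficient is nonzero. Let \(N\) be the maximal \(b\)-degree occurring among the surviving terms, using the degrees \(r\) for \(p_{2r}\) and \(p_{2r+1}\) from Lemma~\ref{lem:leading-coefficients} and \(D(\lambda)\) for \(Q_\lambda\). Among the terms of \(b\)-degree \(N\), let \(M\) be the maximal \(a\)-degree in the leading \(b\)-coefficient. This \(a\)-degree is \(r\) for \(p_{2r}\), \(r+1\) for \(p_{2r+1}\), and \(E(\lambda)\) for \(Q_\lambda\).

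Taking \([a^Mb^N]\) gives the following identity, valid for every graph \(G\):
\[
 0=\sum_{\lambda:\,D(\lambda)=N,\ E(\lambda)=M} d_\lambda\Theta_\lambda(X)
 +\bigl[c_{2N}\,2p_{2N}(X)\bigr]_{M=N}
 +\bigl[c_{2N+1}(2N+1)q_{2N}(X)\bigr]_{M=N+1}.
\]
Here a bracketed term is present only when the indicated condition on \(M\) holds. To see this, note that for \(p_{2N+1}\) the coefficient of \(a^{N+1}\) in \eqref{eq:p-odd-leading} is \((2N+1)q_{2N}(X)\). For \(M=N+1\) we must check that no \(\lambda\) has \(E(\lambda)=N+1>D(\lambda)\); this is impossible, since \(E(\lambda)\le D(\lambda)\). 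So in that case the identity reduces to \(c_{2N+1}q_{2N}(X)=0\), and \(q_{2N}\not\equiv0\) forces \(c_{2N+1}=0\), a contradiction.

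For \(M\le N\), the term \(q_{2N}(X)=R_{2N}(X)=\Theta_{(2N)}(X)\) is itself one of the \(\Theta\) parameters. The \(p\)-term \(p_{2N}(X)\) is a closed-walk count of \(I+A\), namely \(\sum_j\binom{2N}{j}p_j(A)\), and this involves \(\homc(C_{2N},G)\). Every \(\Theta_\lambda(I+A)\), by the proof of Lemma~\ref{lem:theta-independence}, is a combination of homomorphism counts of linear forests. The family consisting of \(C_j\) for \(j\ge3\) together with all linear forests is pairwise nonisomorphic. Theorem~\ref{thm:lovasz-independence} therefore gives \(c_{2N}=0\), and Lemma~\ref{lem:theta-independence} then gives \(d_\lambda=0\) for each \(\lambda\) with \((D,E)=(N,M)\).

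The main obstacle is combining these two independence statements into one. For this we extend the triangular-automorphism argument: the parameters \(\Theta_\lambda(I+A)\) together with \(p_{2N}(I+A)\) expand in homomorphism counts with an invertible triangular change of basis, in which the \(C_{2N}\) coefficient of \(p_{2N}(I+A)\) is \(1\).

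Every removed term contradicts the choice of \((N,M)\) as a maximum among surviving terms. The descent is therefore by lexicographic order on \((N,M)\) in the \(b\)-degree and then the \(a\)-degree. The one bookkeeping point to check is that terms of smaller \(b\)-degree, or of equal \(b\)-degree but smaller leading \(a\)-degree, cannot contribute to \([a^Mb^N]\); this holds by maximality. Repeating the step eliminates all coefficients, hence \(\Phi\equiv0\).
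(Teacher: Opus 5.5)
Your proof is correct and is essentially the paper's argument. It uses the same blow-ups $S_{a,b}(G)$ and polynomial identity, eliminates $c_{2N+1}$ at $a^{N+1}b^N$ using $E(\lambda)\le D(\lambda)$, separates $c_{2N}$ via the coefficient of $\homc(C_{2N},G)$, and then applies Lemma~\ref{lem:theta-independence}; your lexicographic maximum on $(N,M)$ simply replaces the paper's explicit descent in $a$-degree. Two small remarks: for $N=1$ the $p$-term is just absent because the normal form has no $c_2$ (there is no $C_2$ to appeal to), and your ``main obstacle'' paragraph is unnecessary, since the two-step separation you already gave (first $c_{2N}=0$ by Theorem~\ref{thm:lovasz-independence}, then Lemma~\ref{lem:theta-independence}) suffices.
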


\begin{proof}
Write \(\Phi\) in the unique normal form
\begin{equation}\label{eq:Phi-normal}
 \Phi(G)=\sum_{r=3}^{R}c_rp_r(G)+\sum_{\lambda\in\Lambda}d_\lambda Q_\lambda(G).
\end{equation}
We interpret \(c_r=0\) whenever \(r\notin\{3,\ldots,R\}\). Fix an arbitrary graph \(G\). Since every \(S_{a,b}(G)\) is split, \(\Phi(S_{a,b}(G))=0\) for all positive integers \(a,b\). By Lemma~\ref{lem:polynomial-dependence}, this is a polynomial in \(a,b\). A polynomial that vanishes on \(\Z_{>0}^2\) is identically zero: first fix \(b\) and vary \(a\), then apply the same argument to each coefficient as a polynomial in \(b\). Thus
\begin{equation}\label{eq:blowup-zero-polynomial}
 \Phi(S_{a,b}(G))=0\qquad\text{in }\R[a,b].
\end{equation}
Because the base graph \(G\) was arbitrary, every coefficient identity extracted below is an identity of graph parameters.

Suppose that the normal form is nonzero. Define \(D\) by
\begin{equation*}
 D:=\max\!\left(
 \left\{\left\lfloor\frac r2\right\rfloor:c_r\neq0\right\}
 \cup
 \{D(\lambda):d_\lambda\neq0\}
 \right).
\end{equation*}
This is the largest of the leading \(b\)-degrees assigned by Lemmas~\ref{lem:leading-coefficients} and~\ref{lem:Q-leading}; it is well defined and satisfies \(D\geq1\).

The coefficient of \(a^{D+1}b^D\) in \eqref{eq:blowup-zero-polynomial} can arise only from the term \(c_{2D+1}p_{2D+1}\). By \eqref{eq:p-odd-leading}, it equals \((2D+1)c_{2D+1}q_{2D}(X)\). Since \(q_{2D}(I+A(K_1))=1\), this graph parameter is not identically zero, and hence
\begin{equation}\label{eq:odd-eliminated}
 c_{2D+1}=0.
\end{equation}

Recall our convention that \(c_2=0\). After \eqref{eq:odd-eliminated}, the coefficient of \(a^Db^D\) is
\begin{equation}\label{eq:aD-bD}
 2c_{2D}p_{2D}(X)
 +\sum_{\substack{\lambda\in\Lambda\\D(\lambda)=D,\ E(\lambda)=D}}
 d_\lambda\Theta_\lambda(X)=0.
\end{equation}
When \(D=1\), the first term is absent because \(c_2=0\). Suppose \(D\geq2\). Since \(I\) commutes with \(A\),
\[
 p_{2D}(I+A)=\sum_{j=0}^{2D}\binom{2D}{j}p_j(A).
\]
The summand \(p_{2D}(A)=\homc(C_{2D},G)\) occurs with coefficient one. Every \(\Theta_\lambda(I+A)\) is a polynomial in the parameters \(q_j(A)\), hence a linear combination of homomorphism counts of linear forests. No linear forest is isomorphic to \(C_{2D}\). By Theorem~\ref{thm:lovasz-independence}, the coefficient of \(\homc(C_{2D},G)\) in \eqref{eq:aD-bD} must vanish, so \(c_{2D}=0\). Lemma~\ref{lem:theta-independence} then gives
\begin{equation*}
 d_\lambda=0\qquad\text{whenever }D(\lambda)=E(\lambda)=D.
\end{equation*}
For \(D=1\), the same conclusion follows directly from Lemma~\ref{lem:theta-independence}.

We now descend through the remaining powers of \(a\). Let \(e<D\), and suppose that all \(d_\lambda\) with \(D(\lambda)=D\) and \(E(\lambda)>e\) have already been eliminated. In the coefficient of \(a^eb^D\), terms whose leading \(b\)-degree is below \(D\) cannot contribute; the two possible \(p\)-terms have already been eliminated; terms with \(E(\lambda)<e\) cannot reach \(a\)-degree \(e\); and terms with \(E(\lambda)>e\) have zero coefficients by the induction hypothesis. Lemma~\ref{lem:Q-leading} therefore shows that this coefficient is exactly
\[
 \sum_{\substack{\lambda\in\Lambda\\D(\lambda)=D,\ E(\lambda)=e}}
 d_\lambda\Theta_\lambda(X).
\]
Lemma~\ref{lem:theta-independence} forces all displayed \(d_\lambda\) to vanish. Descending from \(e=D-1\) to \(e=0\) eliminates every normal-form term whose leading \(b\)-degree is \(D\).

Only finitely many coefficients occur in \eqref{eq:Phi-normal}. Repeating the argument at the next largest leading \(b\)-degree eliminates all \(c_r\) and \(d_\lambda\), contradicting the assumption that the normal form was nonzero. Hence \(\Phi\equiv0\).
\end{proof}

\section{Nonexistence of walk-realizable supporters}

\begin{corollary}\label{cor:no-supporter}
For no integer \(\ell\geq5\) does there exist a walk-realizable \(\mathcal F\)-supporter of order \(\ell\), where \(\mathcal F=\{2K_2,C_4,C_5\}\).
\end{corollary}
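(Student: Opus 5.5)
We argue by contradiction. Suppose \(\Phi=\sum_{H\in\mathcal U_{\leq\ell}}d_H\ind(H,\cdot)\) is a walk-realizable \(\mathcal F\)-supporter of order \(\ell\geq5\).

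First, walk-realizability says that \(\Phi\) is a finite real linear combination of parameters \(n_\beta\) with \(1\leq|\beta|\leq\ell\). In particular \(\Phi\in\mathcal T\).

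Second, \(\Phi\) vanishes on split graphs. Let \(S\) be split. Every induced subgraph \(S[U]\) is again split, because \(\Forb(\mathcal F)\) is hereditary. Hence every \(H\) with \(\ind(H,S)\neq0\) lies in \(\Forb(\mathcal F)\), and Definition~\ref{def:supporter} forces \(d_H=0\) for such \(H\). Therefore \(\Phi(S)=0\).

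Third, Theorem~\ref{thm:split-testing} now gives \(\Phi(G)=0\) for every graph \(G\). To reach a contradiction, evaluate at \(G=2K_2\in\mathcal F\). With the normalization \(\ind(H,H)=1\), we get \(\ind(2K_2,2K_2)=1\), so the term \(d_{2K_2}\ind(2K_2,2K_2)=d_{2K_2}\) is strictly positive. All other terms \(d_H\ind(H,2K_2)\) are nonnegative because \(d_H\geq0\). Hence \(\Phi(2K_2)\geq d_{2K_2}>0\), which contradicts \(\Phi\equiv0\). Evaluating at \(C_4\) or \(C_5\) would work equally well.

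There is no real obstacle once Theorem~\ref{thm:split-testing} is available. The only point to check is that walk-realizability places \(\Phi\) in \(\mathcal T\) exactly as \(\mathcal T\) was defined, namely as the finite linear span of all \(n_\beta\), so that Theorem~\ref{thm:split-testing} applies. This is immediate. The length bound \(|\beta|\leq\ell\) and the condition \(\ell\geq5\) play no role in the argument.
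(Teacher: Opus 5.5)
Your proposal is correct and follows the paper's proof essentially verbatim: vanishing on split graphs via heredity and \(d_H=0\) for split \(H\), then Theorem~\ref{thm:split-testing} to get \(\Phi\equiv0\), contradicted by \(\Phi(F)\geq d_F\ind(F,F)=d_F>0\) for \(F\in\mathcal F\). The only small addition worth noting is that \(\ell\geq5\) is what ensures every \(F\in\mathcal F\) lies in \(\mathcal U_{\leq\ell}\), so that \(d_F\) is defined; it is not otherwise used.
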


\begin{proof}
Suppose that \(\Phi(G)=\sum_{H\in\mathcal U_{\leq\ell}}d_H\ind(H,G)\) is such a supporter. If \(S\) is split, then every induced subgraph of \(S\) is split and hence \(\mathcal F\)-free. Definition~\ref{def:supporter} therefore gives \(\Phi(S)=0\). Since \(\Phi\) is walk-realizable, it belongs to \(\mathcal T\), and Theorem~\ref{thm:split-testing} yields \(\Phi\equiv0\).

On the other hand, for every \(F\in\mathcal F\), nonnegativity and \(\ind(F,F)=1\) give
\[
 \Phi(F)=\sum_Hd_H\ind(H,F)\geq d_F\ind(F,F)=d_F>0,
\]
a contradiction.
\end{proof}

Corollary~\ref{cor:no-supporter} proves
Theorem~\ref{thm:main}\textup{(ii)} and completes the resolution of
Conjecture~\ref{conj:WangTang}.

\begin{remark}\label{rem:stronger}
Theorem~\ref{thm:split-testing} is strictly stronger than
Corollary~\ref{cor:no-supporter}: it requires neither nonnegativity nor
an induced-subgraph expansion. It states that restriction to split
graphs is injective on the entire space \(\mathcal T\).

This obstruction is specific to finite linear combinations of the
patterned closed-walk counts \(n_\beta\). Thus it rules out the entire
finite-order supporter strategy for split graphs, rather than merely a
certificate of any particular order. It does not rule out certificates
arising from nonlinear functions of generalized spectral invariants,
infinite trace expansions, or invariants outside \(\mathcal T\).
\end{remark}

\bigskip
\noindent
Yanyang Li\\
School of Mathematics, Southeast University, Nanjing 211189, China\\
\textit{Email:} \texttt{liyanyang1219@gmail.com}

\end{document}